\newtheorem{thm}{Theorem}
\newtheorem{lem}[thm]{Lemma}
\newtheorem{cor}[thm]{Corollary}
\newtheorem{lemma}[thm]{Lemma}
\theoremstyle{definition}
\newtheorem*{definition*}{Definition}
\newtheorem{dfn}[thm]{Definition}
\newtheorem{remark}[thm]{Remark}
\def\CM{\mathcal M}
\newcommand{\Z}{\mathbb{Z}}
\newcommand{\scl}{{\rm scl}}
\newcommand{\cl}{{\rm cl}}
\begin{document}

\title[On stable commutator lengths of Dehn twists  along separating curves]
{On stable commutator lengths of Dehn twists  along separating curves}

\author[N. Monden]{Naoyuki Monden}
\address{Department of Engineering Science, Osaka Electro-Communication University, Hatsu-cho 18-8, Neyagawa, 572-8530, Japan}
\email{monden@isc.osakac.ac.jp}

\author[K. Yoshihara]{Kazuya Yoshihara}
\address{}
\email{}

\begin{abstract}

We give new upper bounds on the stable commutator lengths of Dehn twists along separating curves in the mapping class group of a closed oriented surface. 
The estimates of these upper bounds are $O(1/g)$, where $g$ is the genus of the surface.
\end{abstract}

\maketitle

\setcounter{secnumdepth}{2}
\setcounter{section}{0}

\vspace{0.1in}
\section{Introduction}\label{section:one}

For $x$ in the commutator subgroup $[G,G]$ of a group $G$, we define the \textit{commutator length} $\cl_{G} (x)$ of $x$ to be the smallest number of commutators in $G$ whose product is equal to $x$. The \textit{stable commutator length} $\scl_{G} (x)$ of $x$ is the limit 
\[\scl_{G} (x) = \lim_{ n \to \infty} \frac{ \cl_{G} (x^n ) }{n}. \]

Let $\CM_g$ be the mapping class group of a closed oriented surface of genus $g$, and let $t_c$ be the Dehn twist along a simple closed curve $c$ on the surface. 
The natural problem is to calculate $\scl_{\CM_g}(t_c)$. 
However, in general, computing $\scl$ is difficult. 
Therefore, it makes sense to give upper and lower bounds on $\scl_{\CM_g}(t_c)$ (Note that using Louwsma's results \cite{l}, we find $\scl_{{\mathcal M}_{1}}(t_{c})=1/12$).

The lower bounds on $\scl_{\CM_g}(t_c)$ were given by Endo-Kotschick \cite{ek} using Gauge theory. 
For technical reasons, 
they showed that $1/(18g+6)\leq \scl_{\CM_g}(t_c)$ only for a separating curve $c$. This assumption is removed by Korkmaz \cite{ko}. 
He also gave upper bounds on stable commutator lengths of Dehn twists in \cite{ko}. 
An upper bound on $\scl_{\CM_g}(t_c)$ given in \cite{ko} (see also \cite{mo}) is constant for $g$. 
However, according to \cite{k} (see also \cite{ca}), there is an estimate $\scl_{\CM_g}(t_c) = O(1/g)$, that is, $\lim_{g \to \infty} \scl_{\CM_g}(t_c) = 0$. 
Such an upper bound were given in \cite{cms} for a nonseparating curve $c$, and the estimate is $\scl_{\CM_g}(t_c) \leq 1/(4g+6+(2/g))$ for $g\geq 1$. 
On the other hand, to the best of our knowledge, there is no such upper bound for a separating curve $s_h$ on $\Sigma_g$, where $s_h$ separates into two components with genera $h$ and $g-h$
For this reason, we focus on separating curves. 
We prove the following. 
\begin{thm}\label{th:one}
For some integers $k$, $h$ and $r$, we assume that genus $g = k h + r$ $(h=1,2,\ldots,\left[\frac{g}{2}\right], \ r = 0,1,\ldots ,h-1)$.
Then we have 
\[ \scl_{\CM_g}(t_{s_h}) \leq \dfrac{h(2h+1)(2g-2h+1)}{(g+1)(2g+1)-(2g-2h+1)r} \left(\dfrac{1}{2r+1}\scl_{\CM_g}(t_{s_r}) +1 \right). \]
\end{thm}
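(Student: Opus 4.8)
The plan is to exhibit an explicit factorization of a power of $t_{s_h}$ (times a power of a lantern-type or chain relator whose scl is under control) as a product of commutators in $\CM_g$, and then optimize the count. The starting point is the standard "chain relation" / "star relation" technology used by Korkmaz \cite{ko} and in \cite{cms}: on a subsurface of $\Sigma_g$ obtained by gluing several genus-$h$ pieces along a genus-$r$ remainder, one writes a product of Dehn twists along separating curves $s_h$ as a product of boundary twists of embedded subsurfaces, each of which bounds and hence is a product of a controlled number of commutators. Concretely, I would decompose $\Sigma_g$ according to $g = kh + r$ into $k$ "parallel" copies of a genus-$h$ handle plus a genus-$r$ core, so that a chain of $k$ curves isotopic to $s_h$ cobounds with one curve isotopic to $s_r$; the chain relation then gives a relation of the shape $t_{s_h}^{k} \cdot (\text{genus-}r\text{ correction}) = \prod(\text{commutators})$ after raising to a suitable power dictated by the chain relation (the exponent is where the quadratic factors $(g+1)(2g+1)$ etc.\ will come from, since the chain relation on a genus-$m$ chain involves the $(2m+1)$-st or $(2m+2)$-nd power of the boundary twist).

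The key steps, in order, are: (1) Fix the subsurface decomposition of $\Sigma_g$ compatible with $g = kh+r$ and identify which configurations of $s_h$ and $s_r$ appear. (2) Apply the chain relation on each genus-$h$ block to trade a power of a boundary Dehn twist for a product of $t_{s_h}$'s, and separately apply it on the genus-$r$ block to bring in $t_{s_r}$; this yields an identity in $\CM_g$ expressing $t_{s_h}^{N}$, for an explicit $N$ quadratic in $g$ and $h$, in terms of a bounded number $C$ of commutators together with a bounded power of $t_{s_r}$. (3) Pass to scl using the defining homogeneity and the triangle-type inequalities $\scl(xy) \le \scl(x)+\scl(y)+\tfrac12$ (or the sharper quasimorphism bookkeeping), together with $\scl(t_{s_h}^{N}) = N\,\scl(t_{s_h})$, to get $N\,\scl_{\CM_g}(t_{s_h}) \le C + (\text{power})\cdot \scl_{\CM_g}(t_{s_r})$. (4) Plug in the exact values of $N$, $C$, and the $t_{s_r}$-exponent coming from the chain relation and simplify; matching the stated bound forces $N = (g+1)(2g+1) - (2g-2h+1)r$ up to the common factor $h(2h+1)(2g-2h+1)$ and the $t_{s_r}$-exponent to be $2r+1$, which is a sanity check on the combinatorics rather than a separate difficulty.

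The main obstacle I expect is step (2): getting the \emph{bookkeeping of exponents} exactly right so that the powers of the various boundary twists produced by the several applications of the chain relation (one per genus-$h$ block, plus the genus-$r$ block, plus whatever gluing relations are needed to identify all the $s_h$-curves in $\CM_g$) assemble into the single clean quadratic $N$ in the denominator. In particular, the chain relation naturally produces the twist along a boundary curve to an even or odd power depending on the parity of the chain length, and one must arrange the decomposition so that all blocks contribute coherently and the genus-$r$ correction appears to precisely the first power inside the factor $\tfrac{1}{2r+1}\scl_{\CM_g}(t_{s_r})$. A secondary but routine point is controlling the additive constant $C$ (the number of commutators), which should come out to be absorbed into the "$+1$" inside the parenthesis after dividing by $N$; verifying that the embedded subsurfaces used really do bound in $\Sigma_g$ (so that their boundary multitwists are genuinely products of commutators, with the commutator count given by the genus of the subsurface) is where one must be careful that $h \le [g/2]$ is actually used.
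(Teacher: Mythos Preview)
Your approach is not the paper's, and as written it has a real gap. The paper does \emph{not} construct an explicit commutator factorization of any power of $t_{s_h}$; it works entirely on the quasi-morphism side of Bavard's duality. The group-theoretic input is not a commutator identity but the hyperelliptic involution relation
\[
t_{c_1}^2 t_{c_2}\cdots t_{c_{2g}} t_{c_{2g+1}}^2 t_{c_{2g}}\cdots t_{c_2}=\iota,
\]
together with the even chain relation. One cuts the left-hand word into blocks $T_1,\ldots,T_{k+2}$ (each a length-$2h$ or length-$2r$ subchain, with the squared ends giving length-$2h$ chains starting from $c_1^2$ or $c_{2g+1}^2$) and a tail $S$, so that by the chain relation $\phi(T_i)$ equals $\tfrac{1}{4h+2}\phi(t_{s_h})$, $\tfrac{1}{4h}\phi(t_{s_h})$, or $\tfrac{1}{4r+2}\phi(t_{s_r})$, while $\phi(T_1\cdots T_{k+2})=-\phi(S)=-\tfrac{1}{4(g-h)+2}\phi(t_{s_h})$ because $\iota$ is torsion and commutes with $S$. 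A rearrangement lemma (Lemma~\ref{lemma:eight}) shows $T_1\cdots T_{k+2}$ is conjugate to (product of odd-indexed $T_i$)$\cdot$(product of even-indexed $T_i$), two factors each of which is a commuting product; hence a \emph{single} application of $|\phi(XY)-\phi(X)-\phi(Y)|\le D(\phi)$ yields $|\phi(T_1\cdots T_{k+2})-\sum_i\phi(T_i)|\le D(\phi)$. That lone defect is exactly the ``$+1$'' in the statement, and the quadratic denominator $(g+1)(2g+1)-(2g-2h+1)r$ falls out when you collect the fractions $\tfrac{2}{4h}+\tfrac{k-1}{4h+2}+\tfrac{1}{4(g-h)+2}$.

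Your plan, by contrast, needs an honest identity $t_{s_h}^{N}\cdot t_{s_r}^{-M}=\prod_{i=1}^{C}[a_i,b_i]$ in $\CM_g$, and you do not have one: the chain relation does not produce ``$t_{s_h}^{k}\cdot(\text{genus-}r\ \text{correction})=\prod(\text{commutators})$'' from $k$ parallel copies of $s_h$ as you describe, and no lantern/star relation produces the quadratic exponent $(g{+}1)(2g{+}1)$. Even granting some such relation, your step (3) would lose the constant: the inequality $\scl(xy)\le\scl(x)+\scl(y)+\tfrac12$ injects an unavoidable extra $\tfrac12$ that does not get absorbed into the ``$+1$'', and clearing the denominators $4h$, $4h{+}2$, $4r{+}2$, $4(g{-}h){+}2$ at the group level (you cannot take roots of $t_{s_h}$) would blow up the commutator count $C$. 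The missing idea is precisely to pass to quasi-morphisms, where those fractional exponents are available for free and the single-defect trick via Lemma~\ref{lemma:eight} replaces any commutator bookkeeping.
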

Note that $t_{s_0}=id$ in $\CM_g$ since $s_0$ bounds a disk, and therefore $\scl(t_{s_0})=0$. 
As a corollary, we obtain the following results. 
\begin{cor}\label{cor:one}
$\scl_{\CM_g}(t_{s_1}) \leq \dfrac{3(2g-1)}{(g+1)(2g+1)}$. 
\end{cor}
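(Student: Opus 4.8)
The plan is to derive Corollary~\ref{cor:one} as a direct specialization of Theorem~\ref{th:one}. First I would set $h = 1$ in the theorem. Since $r$ is constrained to the range $r = 0, 1, \ldots, h-1$, taking $h = 1$ forces $r = 0$, and then the relation $g = kh + r$ becomes $g = k$, so the hypothesis of the theorem is satisfied for every $g$ (strictly speaking for $g \geq 2$, so that $h = 1 \leq \left[\frac{g}{2}\right]$; the case $g = 1$ can be handled separately, see below).

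Next I would substitute these values into the bound
\[
\scl_{\CM_g}(t_{s_h}) \leq \frac{h(2h+1)(2g-2h+1)}{(g+1)(2g+1)-(2g-2h+1)r}\left(\frac{1}{2r+1}\scl_{\CM_g}(t_{s_r}) + 1\right)
\]
and simplify. With $h = 1$ the numerator of the leading fraction becomes $1 \cdot 3 \cdot (2g-1) = 3(2g-1)$, and with $r = 0$ the denominator becomes $(g+1)(2g+1) - (2g-1)\cdot 0 = (g+1)(2g+1)$. For the parenthetical factor I would invoke the remark in the text that $\scl_{\CM_g}(t_{s_0}) = 0$, since $s_0$ bounds a disk and hence $t_{s_0} = \mathrm{id}$; thus $\frac{1}{2\cdot 0 + 1}\scl_{\CM_g}(t_{s_0}) + 1 = 0 + 1 = 1$. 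Combining these gives exactly $\scl_{\CM_g}(t_{s_1}) \leq \frac{3(2g-1)}{(g+1)(2g+1)}$.

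Finally, for completeness I would address the boundary case $g = 1$, where $\left[\frac{g}{2}\right] = 0$ and so Theorem~\ref{th:one} does not directly apply: here $\frac{3(2g-1)}{(g+1)(2g+1)} = \frac{3}{6} = \frac{1}{2}$, which is consistent with (indeed far weaker than) the known value $\scl_{\CM_1}(t_{c}) = \frac{1}{12}$ recorded in the introduction. I do not anticipate a genuine obstacle in this argument: all the mathematical content already resides in Theorem~\ref{th:one}, and the corollary is obtained purely by plugging in $h = 1$, $r = 0$, $k = g$ and using $\scl_{\CM_g}(t_{s_0}) = 0$; the only care needed is in tracking the index ranges and the trivial arithmetic simplifications.
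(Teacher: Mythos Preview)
Your proposal is correct and is exactly the intended derivation: the paper presents Corollary~\ref{cor:one} as an immediate consequence of Theorem~\ref{th:one} obtained by setting $h=1$, $r=0$, $k=g$ and invoking the remark that $\scl_{\CM_g}(t_{s_0})=0$. Your treatment of the edge case $g=1$ is unnecessary, since the paper works under the standing assumption $g\geq 2$ (and on $\Sigma_1$ the curve $s_1$ coincides with $s_0$, so $t_{s_1}=\mathrm{id}$); the value $1/12$ you cite pertains to nonseparating curves.
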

\begin{cor}\label{cor:two}
If $g=kh$, then $\scl_{\CM_g}(t_{s_h}) \leq \dfrac{h(2h+1)(2g-2h+1)}{(g+1)(2g+1)}$.
\end{cor}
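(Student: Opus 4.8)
The plan is to obtain Corollary \ref{cor:two} as an immediate specialization of Theorem \ref{th:one}, with no new geometric input required. The hypothesis $g = kh$ is precisely the case $r = 0$ of the decomposition $g = kh + r$ in the theorem: since the admissible range for the remainder is $r = 0, 1, \ldots, h-1$ and $h \geq 1$, the value $r = 0$ always lies in this range. Thus I would verify that the standing constraint $1 \leq h \leq \left[\frac{g}{2}\right]$ holds — this is equivalent to $k \geq 2$, which I would record explicitly — and then simply invoke Theorem \ref{th:one} with $r = 0$.

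With $r = 0$ substituted into the bound of Theorem \ref{th:one}, the subtracted term $(2g - 2h + 1)r$ in the denominator vanishes, leaving $(g+1)(2g+1)$, while the prefactor $\frac{1}{2r+1}$ becomes $1$ and the curve $s_r$ becomes $s_0$. The single fact I would then cite is the remark recorded immediately after Theorem \ref{th:one}: because $s_0$ bounds a disk, the twist $t_{s_0}$ is the identity in $\CM_g$, so $\scl_{\CM_g}(t_{s_0}) = 0$. Hence the parenthetical factor collapses, and
\[ \scl_{\CM_g}(t_{s_h}) \leq \frac{h(2h+1)(2g-2h+1)}{(g+1)(2g+1)} \left( \frac{1}{1}\cdot \scl_{\CM_g}(t_{s_0}) + 1 \right) = \frac{h(2h+1)(2g-2h+1)}{(g+1)(2g+1)}, \]
which is exactly the claimed inequality.

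There is essentially no obstacle: the full content of the corollary is already contained in Theorem \ref{th:one}, and the only thing to check is the harmless boundary fact $\scl_{\CM_g}(t_{s_0}) = 0$. The one point deserving a sentence of care is the range condition $h \leq \left[\frac{g}{2}\right]$, which excludes the degenerate case $k = 1$. In that case $g = h$, so the ``separating'' curve $s_g$ cuts off a genus-$0$ piece and therefore itself bounds a disk; consequently $\scl_{\CM_g}(t_{s_g}) = 0$, and the asserted bound holds trivially since its left-hand side is zero. Thus I would either state the corollary for $k \geq 2$ (so that the theorem applies directly) or dispose of $k = 1$ by this trivial observation.
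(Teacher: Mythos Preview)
Your argument is correct and matches the paper's approach: Corollary~\ref{cor:two} is obtained directly from Theorem~\ref{th:one} by setting $r=0$ and invoking the remark that $\scl_{\CM_g}(t_{s_0})=0$. The paper does not spell out the proof, but your specialization (including the care about the range $h\leq[g/2]$) is exactly what is intended.
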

Using Theorem~\ref{th:one} and Corollary~\ref{cor:one}, we can inductively give an upper bound of $\scl_{\CM_g}(t_{s_h})$ for any $h$.

Here is an outline of this paper. 
In Section~\ref{ss:one}, we review basic facts of stable commutator lengths and quasi-morphisms. 
Especially, we present ``Bavard's duality theorem" which gives a relationship between stable commutator lengths and quasi-morphisms. 
Section~\ref{ss:two} provides relations in mapping class groups. These relations are need to prove Theorem~\ref{th:one}. 
In the last section, we prove Theorem~\ref{th:one}. 
\begin{remark}
There are applications of giving lower and upper bounds on the stable commutator length of a Dehn twist. 
Powell \cite{po} showed that any element $x$ in $\CM_g$ can be factorized as a product of commutators if $g\geq 3$, that is, $\CM_g$ is a perfect group. 
The next problem is uniformly perfectness of $\CM_g$. 
Here, a group $G$ is called uniformly perfect if there is a natural number $N$ such that $\cl_G(x)\leq N$ for any element $x$ in $G$. 
It was conjectured by Morita \cite{m} that $\CM_g$ is not uniformly perfect, more strongly, the map $H^2_b(\CM_g)\to H^2(\CM_g;\mathbb{R})$ is not injective, where $H^2(\CM_g)$ (resp. $H^2_b(\CM_g)$) is the second (resp. bounded) cohomology of $\CM_g$. 
Endo-Kotschick \cite{ek} soloved them affirmatively by giving a lower bound of $\scl_{\CM_g}(t_c)$. 
Recently, the first author \cite{mo2} improved an upper bound on $\scl_{\CM_g}(t_c)$ for any simple closed curve $c$ and $g\geq 3$ by giving an explicit factorization of $t_c^n$ for any $n$. 
Using this result, he improved an upper bound on the limit of $\lim_{n\to \infty}\frac{h_g(n)}{n}$ for odd $g$, where $h_g(n)$ is the minimal $h$ such that there exists a $\Sigma_g$-bundle over $\Sigma_h$ with signature $4n$. 
Computing the limit is open problem in Problem 2.18 (B) of \cite{Kirby}. 
\end{remark}
\begin{remark} 
We introduce some background results on factorizations of some power of a Dehn twist as a product of commutators. 
Korkmaz and Ozbagci \cite{ko1} proved that $\cl_{\CM_g}(t_c)=2$ for any simple closed curve $c$ if $g\geq 3$. 
It was shown that $\cl_{\CM_g}(t_d^{10})\leq 2$ for a nonseparating curve $d$ if $g\geq 2$ (see \cite{ko1}) and that $\cl_{\CM_2}(t_{s_1}^5)\leq 6$ for a separating curve $s_1$ (see \cite{ks}). 
For the mapping class group $\CM_g^1$ of a compact oriented surface of genus $g\geq 2$ with a boundary curve $\delta$, it was proved in \cite{bkm} proved that $\cl_{\CM_g^1}(t_\delta^n)=[n/2]$ for any $n$, so $\scl_{\CM_g^1}(t_\delta)=1/2$ (see also \cite{baykur}). 
In the above results, explicit factorizations of some power of a Dehn twist were given. 
\end{remark}
\begin{remark}
We explain a geometrical interpretation of (stable) commutator lengths in $\CM_g$ as Lefschetz fibrations, which play an important role in 4-dimensional topology. 
Note that by the works of \cite{GS} and \cite{Do}, a 4-manifold is symplectic if and only if it admits a Lefschetz fibration, up to blow-up. 
Let $x$ be a product of right-handed Dehn twists $t_{v_1} \cdots t_{v_n}$. If we give a relation $x=\prod_{i=1}^k [a_i,b_i]$ in $\CM_g$, then we obtain a Lefschetz fibration with fiber $\Sigma_g$ over $\Sigma_k$ with $n$ singular fibers such that each of them is obtained by collapsing simple closed curve $v_j$ on $\Sigma_g$ to a point (The Euler characteristic of the 4-manifold admitting this Lefschetz fibration is equal to $4(g-1)(k-1)+n$). 
Conversely, given a Lefschetz fibration, we get the above relation. 
From this, computing $\cl_{\CM_g}(x)$ means that calculating the minimal base genus of such Lefschetz fibration and the minimal Euler characteristic of its total space. 
Therefore, $\cl_{\CM_g}(t_c^n)$ gives the ``smallest" Lefschetz fibraitons (in the sense of the Euler characteristic) in the ``simplest" ones (in the sense of singular fibers). Thus, we can regard $\scl_{\CM_g}(t_c)$ as the growth rate of the genus of their bases. 
\end{remark}

\vspace{0.1in}
\section{ Preliminaries}\label{section:two}


\subsection{Stable commutator lengths and quasi-morphisms}\label{ss:one}
For a group $G$, let $[ G,G ]$ denote the commutator subgroup of $G$. 
\begin{dfn}\rm
For $x \in [ G,G ]$, we define the \textit{commutator length} $\cl_{G} (x)$ of $x$ to be the smallest number of commutators in $G$ whose product is equal to $x$. The \textit{stable commutator length} $\scl_{G} (x)$ of $x$ is the limit 
\[\scl_{G} (x) = \lim_{ n \to \infty} \frac{ \cl_{G} (x^n ) }{n}. \]
By convention we define $\cl_{G} (x) = \infty$ if $x \in G$ is not in $[ G,G ]$, and $\scl_G(x)<\infty$ if and only if $x^m \in [ G,G ]$ for some integer $m$. 
\end{dfn}
The limit exists since the non-negative function $n \to \cl_{G} (x^n)$ is subadditive, and $\cl_G$ and $\scl_G$ are class functions. 
We need the theory of quasi-morphisms in order to prove Theorem~\ref{th:one}. 
In this section, we recall the definition and basic properties of quasi-morphisms. We refer the reader to \cite{ca} for the details. 
\begin{dfn}\rm
\label{define:three}
Let $G$ be a group. A $\emph{quasi-morphism}$ on $G$ is a function $\phi : G \rightarrow \mathbb{R}$ for which there is a constant $D(\phi) \geq 0$ such that 
\[ | \phi (xy) - \phi (x) - \phi (y) | \leq D( \phi ) \]
for all $x,y \in G$. We call $D ( \phi )$ a \textit{defect} of $\phi$. A quasi-morphism $\phi$ is called \textit{homogeneous} if \[\phi(x^n) = n\phi(x)\] for all $x \in G$ and all $n \in \Z$. 
\end{dfn}

We recall the following basic properties of homogeneous quasi-morphisms (for example, see Section 5.5.2 of \cite{ca} and Lemma 2.1 (1) of \cite{k} for proofs). 
\begin{lem}
\label{lemma:three}
Let $\phi : G \to \mathbb{R}$ be a homogeneous quasi-morphism on a group $G$. For all $x,y \in G$, the following holds. 
\begin{enumerate}
\item[{\rm (a)}] $\phi (x ) = \phi (yxy^{-1} )$,

\item[{\rm (b)}] If $xy = yx$, then $ \phi (xy) = \phi (x) + \phi (y).$
\end{enumerate}
\end{lem}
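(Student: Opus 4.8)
The plan is to exploit homogeneity by passing to powers and then letting the exponent tend to infinity, so that the bounded error terms coming from the defect $D(\phi)$ wash out in the limit. First I would record two immediate consequences of the definition of a homogeneous quasi-morphism: taking $n=0$ in $\phi(x^n)=n\phi(x)$ gives $\phi(e)=0$, and taking $n=-1$ gives $\phi(x^{-1})=-\phi(x)$ for every $x\in G$; in particular $\phi(y)+\phi(y^{-1})=0$.

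For part (b), suppose $xy=yx$. Then $(xy)^n=x^ny^n$ for every $n\in\N$, so applying the quasi-morphism inequality to the pair $(x^n,y^n)$ yields
\[ \bigl| \phi\bigl((xy)^n\bigr) - \phi(x^n) - \phi(y^n) \bigr| \leq D(\phi). \]
Using homogeneity on each of the three terms, the left-hand side becomes $|n\phi(xy)-n\phi(x)-n\phi(y)|$; dividing by $n$ and letting $n\to\infty$ forces $\phi(xy)=\phi(x)+\phi(y)$.

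For part (a), observe that $(yxy^{-1})^n = yx^ny^{-1}$. Applying the quasi-morphism inequality twice — first to split off $y$ on the left, then to split $x^n$ from $y^{-1}$ — bounds $\phi(yx^ny^{-1})$ from above and below by $\phi(y)+\phi(x^n)+\phi(y^{-1})$ with an error of at most $2D(\phi)$, and since $\phi(y)+\phi(y^{-1})=0$ this reads
\[ \bigl| \phi\bigl( (yxy^{-1})^n \bigr) - \phi(x^n) \bigr| \leq 2D(\phi). \]
Homogeneity turns the left-hand side into $|n\phi(yxy^{-1})-n\phi(x)|$, and once again dividing by $n$ and passing to the limit gives $\phi(yxy^{-1})=\phi(x)$.

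There is no serious obstacle here; the only point requiring a little care is the bookkeeping of the defect when the quasi-morphism inequality is iterated (each extra application enlarges the constant by $D(\phi)$), together with the observation that these constants do not depend on $n$ — which is precisely what makes the limiting argument valid. One could alternatively deduce (a) from (b) applied to a suitable commuting pair, but the direct argument above is the cleanest.
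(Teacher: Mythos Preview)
Your argument is correct and is in fact the standard proof of these facts: pass to $n$-th powers, use homogeneity to pull out the factor $n$, and let $n\to\infty$ so that the bounded defect disappears. The bookkeeping you flag (that iterating the defect inequality accumulates only an $n$-independent constant) is handled correctly.

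There is nothing to compare against in the paper itself: the authors do not prove this lemma but simply cite Section~5.5.2 of Calegari's monograph and Lemma~2.1(1) of Kotschick for the proofs. Your write-up is essentially what one finds in those references.
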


\begin{thm}[{Bavard's Duality Theorem ~\cite{ba} } ]
\label{theorem:two}
Let $Q$ be the set of all homogeneous quasi-morphisms on a group $G$ with positive defects. For any $x \in [G,G]$, we have
\[ \scl_{G} (x) = \sup_{\phi \in Q , D(\phi) }  \frac{| \phi (x) |}{2 D(\phi) }.  \]
\end{thm}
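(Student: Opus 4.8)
The plan is to prove the two inequalities separately. Write $S(x) := \sup_{\phi \in Q} \frac{|\phi(x)|}{2D(\phi)}$, so the assertion is that $\scl_{G}(x) = S(x)$.

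First I would establish the easy inequality $S(x) \le \scl_{G}(x)$. The key ingredient is a commutator estimate for homogeneous quasimorphisms: for every $\phi \in Q$ and all $a,b \in G$ one has $|\phi([a,b])| \le D(\phi)$. To see this, write $[a,b] = (ab)(ba)^{-1}$ and combine the defining inequality of $\phi$ with homogeneity (so $\phi(y^{-1}) = -\phi(y)$) and the conjugation invariance $\phi(ba) = \phi\big(a(ba)a^{-1}\big) = \phi(ab)$ furnished by Lemma~\ref{lemma:three}(a). Iterating the quasimorphism inequality across a product of $k$ commutators then yields $\big|\phi\big(\prod_{i=1}^{k}[a_i,b_i]\big)\big| \le (2k-1)D(\phi)$. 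Applying this with $k = \cl_{G}(x^n)$ and using homogeneity $\phi(x^n) = n\phi(x)$ gives $n|\phi(x)| \le (2\cl_{G}(x^n)-1)D(\phi)$; dividing by $n$ and letting $n \to \infty$ gives $|\phi(x)| \le 2\scl_{G}(x)D(\phi)$, and taking the supremum over $\phi$ yields $S(x) \le \scl_{G}(x)$.

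The reverse inequality $\scl_{G}(x) \le S(x)$ is the heart of the theorem, and I would prove it by a Hahn--Banach duality argument. Consider the bar chain complex $C_{*}(G;\R)$ with boundary $\partial$, and let $B_1(G;\R) = \partial C_2(G;\R)$ be the real $1$-boundaries. Homogenizing---passing to the quotient $B_1^{H}(G)$ of $B_1(G;\R)$ by the subspace spanned by the elements $g^n - n\cdot g$ and $g - hgh^{-1}$---one shows that $\scl_{G}$ extends to a well-defined pseudo-norm $\|\cdot\|$ on $B_1^{H}(G)$ whose value on the class of the $1$-chain $x$ equals $\scl_{G}(x)$. The crucial step is to identify the dual of $(B_1^{H}(G),\|\cdot\|)$: a linear functional on $C_1(G;\R)$ that is bounded on boundaries and compatible with the homogenization relations is, read off on single group elements, exactly a homogeneous quasimorphism $\phi$, and the operator norm of the functional it induces on $(B_1^{H}(G),\|\cdot\|)$ equals $2D(\phi)$. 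Granting this identification, I would invoke Hahn--Banach: the norm $\scl_G(x)$ of the class of $x$ is computed by a norm-one dual functional, and pulling it back through the identification produces a homogeneous quasimorphism $\phi$ realizing $\scl_{G}(x) = \frac{|\phi(x)|}{2D(\phi)}$ (or, using an $\varepsilon$-optimal functional, $\ge \scl_{G}(x) - \varepsilon$), giving $\scl_{G}(x) \le S(x)$.

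The main obstacle is precisely this hard inequality, and within it two supporting facts must be verified with care: (i) that $\scl_{G}$ descends to a genuine pseudo-norm on $B_1^{H}(G)$, which requires the surface/gluing interpretation of commutator length to be compatible with the homogenization relations and with stabilization; and (ii) the exact identification of the dual space with $Q/\mathrm{Hom}(G,\R)$ together with the matching of the dual norm with $2D(\phi)$, where homomorphisms (those of defect $0$) lie in the kernel and must be quotiented out. Hahn--Banach only supplies an abstract extremal functional; the genuine work lies in translating it back into an honest homogeneous quasimorphism with controlled defect, and this is where homogeneity---the mechanism behind Lemma~\ref{lemma:three}---is used once more to replace an arbitrary bounded cocycle by its homogenization.
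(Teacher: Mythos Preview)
The paper does not prove this theorem: it is quoted as a background result with a citation to Bavard~\cite{ba} (see also Calegari~\cite{ca}), and is then used as a black box in the proof of Theorem~\ref{th:one}. So there is no ``paper's own proof'' to compare against.

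That said, your outline is the standard one and is essentially correct. The easy inequality $S(x)\le \scl_G(x)$ via the commutator bound $|\phi([a,b])|\le D(\phi)$ and the product estimate $|\phi(\prod_{i=1}^k [a_i,b_i])|\le (2k-1)D(\phi)$ is exactly right. For the hard direction, the Hahn--Banach/duality strategy you describe is Bavard's original approach, and your identification of the obstacles (that $\scl$ extends to a pseudo-norm on the homogenized boundaries, and that the dual norm matches $2D(\phi)$) is accurate. One small caution: Hahn--Banach alone gives $\varepsilon$-optimal functionals, hence only the inequality $\scl_G(x)\le S(x)$; the stronger statement that the supremum is actually \emph{attained} by some extremal $\phi$ requires an additional weak-$*$ compactness argument (as in \cite{ca}), not just Hahn--Banach. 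You allude to this parenthetically, which is fine for the inequality, but be careful not to conflate the two claims.
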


\subsection{Relations in mapping class groups}\label{ss:two}

Let $\Sigma_g$ be closed connected oriented surface of genus $g\geq 2$, and let $\CM_g$ be the \textit{mapping class group} of $\Sigma_g$, that is the group of isotopy classes of orientation preserving self-diffeomorphisms of $\Sigma_g$. 
Since $\CM_g / [ \CM_g , \CM_g] $ is isomorphic to $\Z_{12}$ if $g=1$, $\Z_{10}$ if $g = 2$ (see \cite{bh}), and is trivial if $g \geq 3$ (see \cite{po}), we can define $\scl_{\CM_g} (x)$ for any $x \in \CM_g$.

Let $t_c$ be the right-handed Dehn twist along a simple closed curve $c$ on $\Sigma_g$. 
We review some relations among Dehn twists. 
More details can be found in \cite{fm}.

\begin{lem}\label{lemma:four}
For any two separating curves $s_h$ and $s_h^\prime$ on $\Sigma_g$ which separate into two components with genera $h$ and $g-h$, $t_{s_h}$ is conjugate to $t_{s_h^\prime}$ and $t_{s_{g-h}}$. 
\end{lem}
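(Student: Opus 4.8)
The plan is to reduce everything to two standard facts recorded in \cite{fm}: the behaviour of Dehn twists under conjugation, and the change of coordinates principle. First I would recall that for any $f \in \CM_g$ and any simple closed curve $c$ on $\Sigma_g$ one has $f\, t_c\, f^{-1} = t_{f(c)}$, with no inverse on the right because every mapping class is represented by an orientation-preserving diffeomorphism. Thus it suffices, for each of the curves $s_h^\prime$ and $s_{g-h}$, to exhibit an element of $\CM_g$ carrying $s_h$ to it.

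For $s_h^\prime$: both $s_h$ and $s_h^\prime$ are separating, and cutting $\Sigma_g$ along either one produces the disjoint union of a compact oriented surface of genus $h$ with one boundary component and a compact oriented surface of genus $g-h$ with one boundary component. By the classification of compact oriented surfaces with boundary, these two cut-open surfaces admit an orientation-preserving diffeomorphism matching corresponding boundary components; after regluing, this diffeomorphism descends to an orientation-preserving self-diffeomorphism $\phi$ of $\Sigma_g$ with $\phi(s_h) = s_h^\prime$. This is exactly the change of coordinates principle. Hence $t_{s_h^\prime} = [\phi]\, t_{s_h}\, [\phi]^{-1}$ is conjugate to $t_{s_h}$ in $\CM_g$.

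For $s_{g-h}$: by definition a curve of type $s_{g-h}$ separates $\Sigma_g$ into two pieces of genera $g-h$ and $g-(g-h)=h$, so it is of precisely the same topological type as $s_h$. The argument of the previous paragraph then applies verbatim with $s_h^\prime$ replaced by $s_{g-h}$, giving that $t_{s_{g-h}}$ is conjugate to $t_{s_h}$.

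I do not expect a genuine obstacle here: the statement is a routine consequence of the classification of surfaces. The only point needing a little care is to make sure the diffeomorphism realizing the change of coordinates can be chosen orientation-preserving, which is what keeps us inside $\CM_g$ and ensures that no inverse appears in the conjugation formula $f\, t_c\, f^{-1} = t_{f(c)}$.
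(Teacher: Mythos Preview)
Your argument is correct. The paper itself does not supply a proof of this lemma: it is stated as part of a list of standard relations among Dehn twists with the blanket reference ``More details can be found in \cite{fm}.'' Your proof via the conjugation formula $f\,t_c\,f^{-1}=t_{f(c)}$ together with the change of coordinates principle is exactly the standard route recorded in \cite{fm}, so there is nothing to compare.
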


\begin{lem}\label{lemma:five}
If $c$ and $d$ are two disjoint simple closed curves on $\Sigma_g$, then $t_c t_d = t_d t_c$. 
\end{lem}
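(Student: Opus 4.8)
The plan is to exploit the fact that a Dehn twist is \emph{supported} in an annular neighborhood of its defining curve: the mapping class $t_c$ can be represented by an orientation-preserving diffeomorphism that restricts to the identity outside any chosen regular neighborhood of $c$. Since $c$ and $d$ are disjoint compact subsets of $\Sigma_g$, by the tubular neighborhood theorem (shrinking if necessary) we may choose regular annular neighborhoods $A_c \supset c$ and $A_d \supset d$ with $A_c \cap A_d = \emptyset$.

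Next I would fix representatives $\varphi_c$ of $t_c$ and $\varphi_d$ of $t_d$, using the standard model of the twist in an annulus, so that $\varphi_c$ is the identity on $\Sigma_g \setminus A_c$ and $\varphi_d$ is the identity on $\Sigma_g \setminus A_d$. Then I would verify that $\varphi_c$ and $\varphi_d$ commute as diffeomorphisms of $\Sigma_g$ by evaluating both compositions pointwise: on $A_c$ the map $\varphi_d$ is the identity, so $\varphi_c\varphi_d = \varphi_c = \varphi_d\varphi_c$ there; symmetrically both compositions agree with $\varphi_d$ on $A_d$; and on $\Sigma_g \setminus (A_c \cup A_d)$ both are the identity. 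Hence $\varphi_c\varphi_d = \varphi_d\varphi_c$, and passing to isotopy classes gives $t_c t_d = t_d t_c$ in $\CM_g$. Alternatively, one can package this via the conjugation formula $f t_c f^{-1} = t_{f(c)}$: since $\varphi_d$ is supported away from $c$ it fixes $c$ up to isotopy, so $t_d t_c t_d^{-1} = t_{\varphi_d(c)} = t_c$.

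The only points requiring any care, and both are entirely standard (see \cite{fm}), are the well-definedness of $t_c$ independently of the chosen annular neighborhood and the fact that disjoint compact curves admit disjoint regular neighborhoods; once these are in hand the argument is a routine bookkeeping of supports, so I do not anticipate a substantive obstacle.
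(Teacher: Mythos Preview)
Your argument is correct and is exactly the standard proof: choose disjoint annular supports for the two twists and observe that diffeomorphisms with disjoint supports commute pointwise (or, equivalently, invoke $f t_c f^{-1}=t_{f(c)}$ with $f=t_d$). The paper itself does not prove this lemma; it simply records it as a basic relation and refers the reader to \cite{fm}, where the same support argument appears.
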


\begin{lemma}[The hyperelliptic involution]\label{lemma:six}
Let $c_{1},c_{2},\ldots, c_{2g+1}$ be nonseparating curves in $\Sigma_{g}$ such that $c_i$ and  $c_j$ are disjoint if $|i-j|\geq 2$, and that $c_i$ and $c_{i+1}$ intersect at one point. Then, the product 
\[\iota:=  t_{c_{1}}t_{c_{2}}\cdots t_{c_{2g}}t_{c_{2g+1}}^2 t_{c_{2g}} \cdots t_{c_{2}} t_{c_{1}}. \]
is the \textit{hyperelliptic involution}. In particular, we have the relation $\iota t_{c_i} = t_{c_i} \iota$ for $i=1,2,\ldots,2g+1$. 
\end{lemma}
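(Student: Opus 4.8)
The plan is to pass to the hyperelliptic double cover, use the Birman--Hilden theorem to show that $\iota$ projects to a trivial element of the mapping class group of the punctured sphere, and then use the action on first homology to decide which of the two possible lifts $\iota$ is.

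First I would reduce to a standard model. By the change-of-coordinates principle, any two systems $c_1,\dots,c_{2g+1}$ of nonseparating simple closed curves with the stated intersection pattern are carried onto each other by a homeomorphism of $\Sigma_g$: such a chain fills $\Sigma_g$, since a regular neighborhood $N$ of $c_1\cup\cdots\cup c_{2g+1}$ has $\chi(N)=-2g$ and two boundary components, hence genus $g$, so $\Sigma_g\setminus N$ is a pair of disks. Thus we may assume $\Sigma_g$ is the hyperelliptic surface $y^2=\prod_{k=1}^{2g+2}(x-k)$, that $\iota_0(x,y)=(x,-y)$ is the deck transformation of the degree-two branched cover $\pi\colon\Sigma_g\to S^2$ ($S^2$ the $x$-sphere, with branch points $1,\dots,2g+2$), and that $c_i=\pi^{-1}([i,i+1])$. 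Since $\iota_0$ preserves fibers of $\pi$, we have $\iota_0(c_i)=c_i$ for every $i$; as $\iota_0$ is an orientation-preserving involution this gives $\iota_0t_{c_i}\iota_0^{-1}=t_{\iota_0(c_i)}=t_{c_i}$. Hence $\iota_0$ commutes with $\iota=t_{c_1}\cdots t_{c_{2g}}t_{c_{2g+1}}^2t_{c_{2g}}\cdots t_{c_1}$, and every $t_{c_i}$ lies in the symmetric mapping class group $\mathrm{SMod}(\Sigma_g)$, i.e.\ the centralizer of $\iota_0$ in $\CM_g$. In particular, once we prove $\iota=\iota_0$, the relations $\iota t_{c_i}=t_{c_i}\iota$ are immediate from this step.

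The heart of the argument is the identification $\iota=\iota_0$. Because $g\ge 2$, the Birman--Hilden theorem provides an isomorphism $\mathrm{SMod}(\Sigma_g)/\langle\iota_0\rangle\cong\mathrm{Mod}(S_{0,2g+2})$ induced by $\pi$, under which $t_{c_i}$ corresponds to the half-twist $h_i$ along the arc $\alpha_i=\pi(c_i)=[i,i+1]$. Since $\alpha_1,\dots,\alpha_{2g+1}$ form a path through all $2g+2$ marked points, the image of $\iota$ is the palindromic word $h_1\cdots h_{2g}h_{2g+1}^2h_{2g}\cdots h_1$, which is trivial in $\mathrm{Mod}(S_{0,2g+2})$: it is one of the defining relations (beyond the braid relations) of the mapping class group of the punctured sphere, and geometrically it says that the palindromic product of half-twists along a chain of arcs is the Dehn twist about the boundary of a regular neighborhood of the chain --- here a curve bounding a disk. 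Consequently $\iota$ lies in the kernel $\langle\iota_0\rangle$, so $\iota\in\{1,\iota_0\}$. To exclude $\iota=1$ I would compute the induced map on $H_1(\Sigma_g;\mathbb{Z})$ from the transvection formula $(t_{c_i})_*(u)=u+\langle u,[c_i]\rangle\,[c_i]$: a short computation --- cleanest by induction on $g$, using that $[c_1],\dots,[c_{2g+1}]$ realize the $A_{2g+1}$ intersection pattern and span $H_1$ --- gives $\iota_*=-\mathrm{id}$, which differs from the identity. Hence $\iota\ne 1$, so $\iota=\iota_0$ is the hyperelliptic involution.

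I expect the third paragraph to contain the main obstacle: invoking Birman--Hilden cleanly and recognizing the projected word as a defining relator of $\mathrm{Mod}(S_{0,2g+2})$ (equivalently, proving that arc-chain relation on the punctured sphere directly, e.g.\ from the lantern relation). The homology computation is routine, but it is the step that actually separates $\iota_0$ from the identity. If one wishes to avoid Birman--Hilden, an alternative is to take as input the $(2g+1)$-chain relation $(t_{c_1}\cdots t_{c_{2g+1}})^{2g+2}=1$ together with the classical fact that $(t_{c_1}\cdots t_{c_{2g+1}})^{g+1}$ is the hyperelliptic involution, and then verify the identity $\iota=(t_{c_1}\cdots t_{c_{2g+1}})^{g+1}$ by braid-type manipulations inside the subgroup generated by $t_{c_1},\dots,t_{c_{2g+1}}$; this is more computational but stays entirely inside $\CM_g$.
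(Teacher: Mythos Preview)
The paper does not prove this lemma: it is stated as background, with the reader referred to Farb--Margalit \cite{fm} for details. Your argument is correct and is essentially the standard proof one finds there --- reduce to the hyperelliptic model via change of coordinates, apply Birman--Hilden to see that the image of $\iota$ in $\mathrm{Mod}(S_{0,2g+2})$ is the sphere relator $h_1\cdots h_{2g}h_{2g+1}^2h_{2g}\cdots h_1=1$, and then separate $\iota$ from the identity by its action on $H_1$. So your proposal supplies exactly the proof the paper defers to the literature; nothing further is needed.
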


\begin{lem}[The even chain relation]\label{lemma:seven}
For a positive integer $h$, let us consider a sequence of simple closed curves $c_1, c_2, \ldots, c_{2h}$ in $\Sigma_g$ such that $c_i$ and  $c_j$ are disjoint if $|i-j|\geq 2$, and that $c_i$ and $c_{i+1}$ intersect at one point. Then, a regular neighborhood of $c_1\cup c_2\cup \cdots \cup  c_{2h}$ is a subsurface of genus $h$ with connected boundary, denoted by $d_h$. We then have 
\[(t_{c_1} t_{c_2} \cdots t_{c_{2h}} )^{4h+2}=t_{d_h}.\]
\end{lem}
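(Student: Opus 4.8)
The plan is to first reduce the identity to the mapping class group of the subsurface. A regular neighborhood $N$ of $c_1\cup c_2\cup\cdots\cup c_{2h}$ is a copy of $\Sigma_{h,1}$, a genus-$h$ surface with one boundary component, namely $d_h$. The inclusion $N\hookrightarrow\Sigma_g$ induces a homomorphism from the mapping class group $\mathrm{MCG}(N,\bdry N)$ (isotopy classes of self-diffeomorphisms fixing $\bdry N$ pointwise) to $\CM_g$ carrying each $t_{c_i}$ and $t_{d_h}$ to the corresponding Dehn twist; hence it suffices to prove the relation $(t_{c_1}t_{c_2}\cdots t_{c_{2h}})^{4h+2}=t_{d_h}$ inside $\mathrm{MCG}(N,\bdry N)$.

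Next I would pass to the Birman--Hilden double branched cover. Realize $N$ as a double cover $\pi\colon N\to D$ of a disk $D$, branched over $2h+1$ interior points $p_1,\ldots,p_{2h+1}$ lying on a line, with hyperelliptic deck involution $\sigma$; the count $\chi(N)=1-2h=2\chi(D)-(2h+1)$ is consistent, and since $2h+1$ is odd the monodromy of the cover around $\bdry D$ is nontrivial, so $\bdry N$ is connected and is the connected double cover of $\bdry D$. One can arrange the $p_j$ so that for each $i$ the preimage of the straight arc $a_i$ from $p_i$ to $p_{i+1}$ is exactly $c_i$; then $\sigma(c_i)=c_i$, and by Birman--Hilden theory $t_{c_i}$ is the unique lift in $\mathrm{MCG}(N,\bdry N)$ of the half-twist $\sigma_i$ exchanging $p_i$ and $p_{i+1}$, and it is $\sigma$-symmetric. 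Moreover, the descent map from the group of $\sigma$-symmetric classes in $\mathrm{MCG}(N,\bdry N)$ to $\mathrm{MCG}(D,\{p_1,\ldots,p_{2h+1}\})\cong B_{2h+1}$ is injective.

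Now I would carry out the computation downstairs in the braid group. Writing $\delta:=\sigma_1\sigma_2\cdots\sigma_{2h}\in B_{2h+1}$, the classical identity $\delta^{2h+1}=\Delta^2$ holds, where $\Delta$ is the half-twist and the full twist $\Delta^2$ generates the center of $B_{2h+1}$ and equals the boundary Dehn twist $t_{\bdry D}$; hence $\delta^{4h+2}=t_{\bdry D}^{2}$. Since $F:=t_{c_1}t_{c_2}\cdots t_{c_{2h}}$ is $\sigma$-symmetric and descends to $\delta$, the element $F^{4h+2}$ is $\sigma$-symmetric and descends to $t_{\bdry D}^{2}$. On a collar of the boundary, where $\pi$ restricts to the connected double cover $\bdry N\to\bdry D$, a direct local model shows that $t_{\bdry D}^{2}$ lifts to the genuine Dehn twist $t_{\bdry N}=t_{d_h}$, whereas $t_{\bdry D}$ itself lifts only to a boundary-rotating homeomorphism that is not an element of $\mathrm{MCG}(N,\bdry N)$; this doubling is precisely why the even-chain exponent is $4h+2$ rather than $2h+1$. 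As $t_{d_h}$ is also $\sigma$-symmetric and descends to $t_{\bdry D}^{2}$, injectivity of the descent map yields $F^{4h+2}=t_{d_h}$.

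The step I expect to be the main obstacle is the boundary bookkeeping in the previous paragraph: pinning down exactly which lift of $t_{\bdry D}^{2}$ one obtains — showing it is $t_{d_h}$ and not some other power — and checking that the hyperellipticity hypotheses behind the injectivity half of the Birman--Hilden theorem apply to this branched cover of a disk. A fully elementary alternative is induction on $h$: the base case $h=1$ is the $2$-chain relation $(t_{c_1}t_{c_2})^{6}=t_{d_1}$ on $\Sigma_{1,1}$, which follows from the presentation $\mathrm{MCG}(\Sigma_{1,1},\bdry\Sigma_{1,1})\cong B_3=\langle\sigma_1,\sigma_2\mid\sigma_1\sigma_2\sigma_1=\sigma_2\sigma_1\sigma_2\rangle$ together with the identification of $t_{d_1}$ with the central element $(\sigma_1\sigma_2)^{6}$ (equivalently, $\mathrm{MCG}(\Sigma_{1,1},\bdry\Sigma_{1,1})/\langle t_{d_1}\rangle\cong\mathrm{SL}_2(\mathbb{Z})$), while the inductive step reduces the length-$2h$ chain relation to shorter chain relations inside a larger subsurface; see \cite{fm}.
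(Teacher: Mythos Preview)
The paper does not prove this lemma at all: it is stated as one of several standard relations among Dehn twists, with the reader referred to \cite{fm} for details. So there is no ``paper's own proof'' to compare against.

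Your argument is correct and is essentially the standard Birman--Hilden proof that appears in \cite{fm}. The reduction to $\mathrm{MCG}(\Sigma_{h,1},\partial\Sigma_{h,1})$, the realization of $\Sigma_{h,1}$ as a double branched cover of a disk over $2h+1$ points, the correspondence $t_{c_i}\leftrightarrow\sigma_i$, and the braid identity $(\sigma_1\cdots\sigma_{2h})^{2h+1}=\Delta^2$ are all exactly as in that reference. Your boundary computation showing that $t_{\partial D}$ lifts only to a half-twist while $t_{\partial D}^{2}$ lifts to $t_{d_h}$ is the right way to pin down the exponent, and your concern about which lift one obtains is legitimate but resolvable: both $F^{4h+2}$ and $t_{d_h}$ are $\sigma$-symmetric classes in $\mathrm{MCG}(\Sigma_{h,1},\partial\Sigma_{h,1})$ descending to the same element $t_{\partial D}^{2}$, and the Birman--Hilden injectivity for this branched cover (hyperelliptic involution on a surface with one boundary component, rotating the boundary by $\pi$) is established in \cite{fm}; since $\sigma$ has no fixed points on $\partial N$, a symmetric homeomorphism fixing $\partial N$ pointwise descends to one fixing $\partial D$ pointwise, so the descent map lands in $B_{2h+1}$ as you claim. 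The inductive alternative you sketch at the end is also valid but less direct.
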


\section{The proof of Theorem\ref{th:one}}\label{ss:three}
In this section, we prove Theorem~\ref{th:one}. For this, we need the following Lemma. 
\begin{lem}\label{lemma:eight}
Let $G$ be a group, and let $x_1,x_2,\ldots,x_n$ be elements in $G$ such that $x_i x_j = x_j x_i$ for $|i-j|\geq 2$. Then, $x_1 x_3 \cdots x_{2m-1} \cdot x_2 x_4 \cdots x_{2m}$ (resp. $x_1 x_3 \cdots x_{2m+1} \cdot x_2 x_4 \cdots x_{2m}$) is conjugate to $x_1 x_2 \cdots x_{2m}$ if $n=2m$ (resp. $n=2m+1$). 
\end{lem}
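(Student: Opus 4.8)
The plan is to prove, by induction on $n$, a slightly stronger statement: for any sequence $x_1,\ldots,x_n$ with $x_ix_j=x_jx_i$ whenever $|i-j|\ge 2$, the product $x_1x_2\cdots x_n$ is conjugate to the ``shuffled'' word
\[ S_n \;:=\; \bigl(x_1x_3x_5\cdots\bigr)\bigl(x_2x_4x_6\cdots\bigr) \]
(odd-indexed factors first, in increasing order of index, then the even-indexed ones, in increasing order) by a conjugating element that lies in the subgroup $\langle x_3,x_4,\ldots,x_n\rangle$. The two assertions of the lemma are the cases $n=2m$ and $n=2m+1$. Keeping track of where the conjugator lives is the one point that needs care: it guarantees the conjugator commutes with $x_1$ and $x_3$, which is exactly what lets the inductive step close up; without this refinement the naive induction fails.

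For $n\le 2$ there is nothing to prove (take the trivial conjugator). For $n\ge 3$, I would first record the elementary identity
\[ (x_3x_4\cdots x_n)\,(x_1x_2\cdots x_n)\,(x_3x_4\cdots x_n)^{-1} \;=\; (x_3x_4\cdots x_n)\,x_1x_2 \;=\; x_1\,x_3\,x_2\,x_4x_5\cdots x_n, \]
where the second equality just slides $x_1$ to the front (it commutes with $x_3,\ldots,x_n$) and then slides $x_2$ left past $x_n,\ldots,x_4$. Thus $x_1x_2\cdots x_n$ is conjugate, via an element of $\langle x_3,\ldots,x_n\rangle$, to $x_1x_3\cdot(x_2x_4x_5x_6\cdots x_n)$.

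Next I would apply the inductive hypothesis to the $(n-2)$-term sequence $x_2,x_4,x_5,x_6,\ldots,x_n$. After relabelling ($y_1=x_2$ and $y_i=x_{i+2}$ for $i\ge 2$) this sequence again satisfies $y_iy_j=y_jy_i$ for $|i-j|\ge 2$, so by induction $x_2x_4x_5\cdots x_n$ is conjugate to its own shuffle by an element $h\in\langle y_3,\ldots,y_{n-2}\rangle=\langle x_5,x_6,\ldots,x_n\rangle$. Since $x_1$ and $x_3$ each commute with all of $x_5,\ldots,x_n$, the element $h$ commutes with the prefix $x_1x_3$; hence conjugating the whole word $x_1x_3\cdot(x_2x_4x_5\cdots x_n)$ by $h$ simply replaces the tail by its shuffle, and the composite conjugator still lies in $\langle x_3,\ldots,x_n\rangle$. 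It then remains to observe that $x_1x_3$ followed by the shuffle of $x_2,x_4,x_5,\ldots,x_n$ equals $S_n$: this is a purely commutative rearrangement — one slides $x_2$ rightward past the odd-indexed factors $x_5,x_7,\ldots$, all of which commute with it — and one checks directly that this yields $S_n$ whether $n$ is even or odd. This completes the induction.

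The only genuine obstacle is the one already flagged: the invariant ``the conjugator belongs to $\langle x_3,\ldots,x_n\rangle$'' must be carried through the induction, since the inductive conjugation acts on a tail of the word and must not disturb the prefix $x_1x_3$. Choosing the first conjugator to be $x_3x_4\cdots x_n$ (rather than $(x_1x_2)^{-1}$, which produces the same intermediate word) is precisely what preserves this invariant; everything else is routine sliding of commuting factors.
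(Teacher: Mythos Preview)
Your argument is correct. The strengthening of the inductive hypothesis --- that the conjugator lies in $\langle x_3,\ldots,x_n\rangle$ --- is exactly what is needed, and all the verifications go through: the relabelled sequence $y_1=x_2$, $y_i=x_{i+2}$ ($i\ge 2$) inherits the commutation hypothesis, the inductive conjugator $h\in\langle x_5,\ldots,x_n\rangle$ commutes with both $x_1$ and $x_3$, and the final step sliding $x_2$ rightward past $x_5,x_7,\ldots$ is a genuine equality yielding $S_n$ in both parities.

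The paper's proof is organised differently. Instead of recursion it writes down (for, say, $n=2m+1$) an explicit chain of words $X_1,\ldots,X_{2m+1}$ interpolating between the shuffled word $X_1$ and the straight product $X_{2m+1}=x_1\cdots x_n$: in $X_j$ the first $j$ factors appear in natural order while the remaining ones are still split into two parity blocks (whose order alternates with the parity of $j$). One then checks directly that conjugating $X_{j+1}$ by its trailing block gives $X_j$. This is an iterative, one-letter-at-a-time argument requiring no induction and no bookkeeping on where the conjugator lives, at the price of a slightly fussier definition of the intermediate words. Your approach is more structural and makes transparent why the induction closes (the conjugator never disturbs the prefix $x_1x_3$); the paper's is more hands-on and produces the full conjugating element in one pass without a strengthened hypothesis. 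Both are elementary and of comparable length.
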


\begin{proof}
We consider the case where $n=2m+1$. Set 
\begin{align*}
&X_{2k+1} = x_1 x_2 \cdots  x_{2k+1} \cdot x_{2k+3} x_{2k+5} \cdots x_{2m+1} \cdot x_{2k+2} x_{2k+4} \cdots x_{2m}, \\
&X_{2k} = x_1 x_2 \cdots x_{2k} \cdot x_{2k+2} x_{2k+4} \cdots x_{2m} \cdot x_{2k+1} x_{2k+3} \cdots x_{2m+1}. 
\end{align*}
Note that $X_1=x_1 x_3 \cdots x_{2m+1} \cdot x_2 x_4 \cdots x_{2m}$ and $X_{2m+1} = x_1 x_2 \cdots x_{2m+1}$. Therefore, in order to prove Lemma~\ref{lemma:eight}, it is sufficient to show that $X_{i+1}$ is conjugate to $X_i$. From the assumption, we have 
\begin{align*}
&(x_{2k+2} x_{2k+4} \cdots x_{2m}) \cdot X_{2k+1} \cdot (x_{2k+2} x_{2k+4} \cdots x_{2m})^{-1} \\
&= x_{2k+2} x_{2k+4} \cdots x_{2m} \cdot  x_1 x_2 \cdots x_{2k-1} x_{2k} x_{2k+1} \cdot x_{2k+3} x_{2k+5} \cdots x_{2m+1} \\
&= x_1 x_2 \cdots x_{2k-1} \cdot x_{2k} x_{2k+2} \cdots x_{2m} \cdot x_{2k+1} x_{2k+3} x_{2k+5} \cdots x_{2m+1} = X_{2k}.
\end{align*}
Therefore, $X_{2k+1}$ is conjugate to $X_{2k}$. By a similar argument, we can show that $X_{2k}$ is conjugate to $X_{2k-1}$. The proof for $n=2m$ is similar. This finishes the proof of Lemma~\ref{lemma:eight}. 
\end{proof}

We now give the proof of Theorem~\ref{th:one}. 
\begin{proof}[The proof of Theorem~\ref{th:one}]
Let $\phi$ be a homogeneous quasi-morphism on $\CM_g$, and let $c_1,c_2,\ldots,c_{2g+1}$ the simple closed curves in Lemma~\ref{lemma:six}. 
 For simplicity of notation, we write $t_i$ instead of $t_{c_i}$. For $i=2,3,\ldots,k-1$, we write 
\begin{align*}
&T_1 := t_1^2 t_2 t_3 \cdots t_{2h},& &T_i := t_{2(i-1)h+1} t_{2(i-1)h+2} \cdots t_{2ih},& \\
&T_k := t_{2(k-1)h+1} t_{2(k-1)h+2} \cdots t_{2(g-h)},& &T_{k+1} := t_{2(g-h)+1} t_{2(g-h)+2} \cdots t_{2g},& \\
&T_{k+2} := t_{2g+1}^2 t_{2g} t_{2g-1} \cdots t_{2(g-h)+2},& &S := t_{2(g-h)+1} t_{2(g-h)} \cdots t_2.& 
\end{align*}
Then, by Lemma~\ref{lemma:three} (a),~\ref{lemma:four} and~\ref{lemma:seven}, 
\begin{align}
&\phi(T_1) = \phi(T_{k+2}) = \frac{1}{4h}\phi(t_{s_h}), \label{eq1} \\
&\phi(T_i) = \phi(T_{k+1}) = \frac{1}{4h+2}\phi(t_{s_h}), \label{eq2} \\
&\phi(T_k) = \frac{1}{4r+2}\phi(t_{s_r}), \label{eq3} \\
&\phi(S) = \frac{1}{4(g-h)+2}\phi(t_{s_h}) \label{eq4}
\end{align}
for $i=2,3,\ldots,k-1$. From the conjugate $t_1 \iota t_1^{-1}$ and Lemma~\ref{lemma:six}, we have 
\[t_1^2 t_2 t_3 \cdots t_{2g} t_{2g+1}^2 t_{2g} \cdots t_3 t_2 = \iota^{-1} = \iota,\]
that is, $T_1 T_2 \cdots T_{k+2} S = \iota$. In particular, we see that the relation
\begin{align}
T_1 T_2 \cdots T_{k+2} = \iota \cdot S^{-1} \label{eq5}
\end{align}
holds in $\CM_g$. Since $\phi(\iota)=0$ by the definition of homogeneous quasimorphisms, from the equations (\ref{eq4}), (\ref{eq5}), Lemma~\ref{lemma:three} (b) and~\ref{lemma:six}, we obtain 
\begin{align}
\phi(T_1 T_2 \cdots T_{k+2}) = -\phi(S) = -\frac{1}{4(g-h)+2}\phi(t_{s_h}). \label{eq6}
\end{align}
Note that $T_i T_j = T_j T_i$ if $|i-j|\geq 2$ from Lemma~\ref{lemma:five}. From this and Lemma~\ref{lemma:eight}, we have
\begin{align*}
\phi(T_1 T_2 \cdots T_{k+2}) = 
  \left\{ \begin{array}{ll}
      \displaystyle \phi (T_1 T_3 \cdots T_{k+1} \cdot T_2 T_4 \cdots T_{k+2} ) & \ \ \mathrm{if} \ k+2 \ \mathrm{is \ even} \\[1pt]
      \displaystyle \phi (T_1 T_3 \cdots T_{k+2} \cdot T_2 T_4 \cdots T_{k+1} ) & \ \ \mathrm{if} \ k+2 \ \mathrm{is \ odd}.
      \end{array} \right. 
\end{align*}
By this equation, the definition of quasimorphisms, Lemma~\ref{lemma:three} (b) and the equations (\ref{eq1})-(\ref{eq3}) and (\ref{eq6}), 
\begin{align*}
D(\phi) & \geq |\phi(T_1 T_2 \cdots T_{k+2}) - \sum_{i=1}^{k+2} \phi(T_i)| \\
&= \left|-\frac{1}{4(g-h)+2}\phi(t_{s_h}) - \dfrac{2}{4h}\phi(t_{s_h}) - \dfrac{k-1}{4h+2}\phi(t_{s_h})  - \dfrac{1}{4r+2}\phi(t_{s_r}) \right|
\end{align*}
This gives 
\begin{align*}
\dfrac{(g+1)(2g+1)-(2g-2h+1)r}{2h(2h+1)(2g-2h+1)} \left| \phi(t_{s_h}) \right| \leq \dfrac{1}{4r+2} \left| \phi(t_{s_r}) \right| + D(\phi) 
\end{align*}
from $r=g-kh$. Hence, we obtain 
\begin{align*}
\dfrac{\left| \phi(t_{s_h}) \right|}{2D(\phi)} \leq \dfrac{h(2h+1)(2g-2h+1)}{(g+1)(2g+1)-(2g-2h+1)r} \left( \dfrac{1}{2r+1} \dfrac{\left| \phi(t_{s_r}) \right|}{2D(\phi)} + 1 \right). 
\end{align*}
By Bavard's Duality Theorem, 
\begin{align*}
\scl_{\CM_g}(t_{s_h}) \leq \dfrac{h(2h+1)(2g-2h+1)}{(g+1)(2g+1)-(2g-2h+1)r} \left(\dfrac{1}{2r+1}\scl_{\CM_g}(t_{s_r}) +1 \right). 
\end{align*}
This completes the proof of Theorem~\ref{th:one}. 
\end{proof}

\vspace{0.1in}
\noindent \textit{Acknowledgements.} 
The first author was supported by Grant-in-Aid for Young Scientists (B) (No. 13276356), Japan Society for the Promotion of Science. 
The second author would like to thank Susumu Hirose, Noriyuki Hamada, and Takayuki Okuda for helpful comments and invaluable advice on the mapping class groups of the surfaces. Finally, he also would like to thank Professor Osamu Saeki for many helpful suggestions and comments.

\vspace{0.3in}

\end{document}